\newtheorem{thm}{Theorem}[section]
\newtheorem{prop}{Proposition}[section]
\newtheorem{lem}{Lemma}[section]
\newtheorem{cor}{Corollary}[section]
\newtheorem{defi}{Definition}[section]
\newtheorem{rem}{Remark}[section]
\begin{document}

\title{A type of shadowing and distributional chaos}
\author{Noriaki Kawaguchi$^\ast$}
\thanks{$^\ast$JSPS Research Fellow}
\subjclass[2010]{74H65; 37C50}
\keywords{shadowing; s-limit shadowing; chain transitive; distributional chaos; Mycielski set}
\address{Faculty of Science and Technology, Keio University, 3-14-1 Hiyoshi, Kohoku-ku, Yokohama, Kanagawa 223-8522, Japan}
\email{gknoriaki@gmail.com}

\maketitle

\markboth{NORIAKI KAWAGUCHI}{A TYPE OF SHADOWING AND DISTRIBUTIONAL CHAOS}

\begin{abstract}
For any continuous self-map of a compact metric space, we prove a saturation of distributionally scrambled Mycielski sets under a type of shadowing and the chain transitivity.
\end{abstract}

\section{introduction}

Shadowing is an important subject of topological studies of dynamical systems, and its implications for chaos have been examined in many papers so far (see, e.g. \cite{AC,OW}). Recently, Li et al. \cite{LLT} proved that Devaney chaos with shadowing implies the distributional chaos of type 1 (DC1), and more strongly, the existence of a distributionally scrambled Mycielski set. In this paper, by exploiting a method in \cite{LLT}, we prove a saturation of the distributionally scrambled Mycielski sets under a type of shadowing and the chain transitivity. It extends the result of \cite{LLT} in certain respects.

Throughout, $X$ denotes a compact metric space endowed with a metric $d$. We recall the definition of distributional $n$-chaos \cite{LO,TF}.

\begin{defi}
\normalfont
For a continuous map $f\colon X\to X$, an $n$-tuple $(x_1,x_2,\dots,x_n)\in X^n$, $n\ge2$, is said to be {\em distributionally $n$-$\delta$-scrambled} for $\delta>0$ if
\begin{equation*}
\limsup_{m\to\infty}\frac{1}{m}|\{0\le k\le m-1\colon\max_{1\le i<j\le n}d(f^k(x_i),f^k(x_j))<\epsilon\}|=1
\end{equation*}
for all $\epsilon>0$, and
\begin{equation*}
\limsup_{m\to\infty}\frac{1}{m}|\{0\le k\le m-1\colon\min_{1\le i<j\le n}d(f^k(x_i),f^k(x_j))>\delta\}|=1.
\end{equation*}
Let ${\rm DC1}_n^\delta(X,f)$ denote the set of distributionally $n$-$\delta$-scrambled $n$-tuples and let ${\rm DC1}_n(X,f)=\bigcup_{\delta>0}{\rm DC1}_n^\delta(X,f)$.
A set $S\subset X$ is said to be {\em distributionally $n$-scrambled} (resp. {\em $n$-$\delta$-scrambled}) if $(x_1,x_2,\dots ,x_n)\in{\rm DC1}_n(X,f)$ (resp. ${\rm DC1}_n^\delta(X,f)$) for any distinct $x_1,x_2,\dots,x_n\in S$. We say that $f$ exhibits the {\em distributional $n$-chaos of type 1} if there is an uncountable distributionally $n$-scrambled subset of $X$.
\end{defi}

Our results are based on a relation defined by Richeson and Wiseman for chain transitive maps \cite{RW}. We recall it in a form suitable for our use. First, we define the chain transitivity.

\begin{defi}
\normalfont
Let $f\colon X\to X$ be a continuous map. For $\delta>0$, a sequence $(x_i)_{i=0}^k$ of points in $X$, where $k$ is a positive integer, is called a {\em $\delta$-chain} of $f$ if $d(f(x_i),x_{i+1})\le\delta$ for every $0\le i\le k-1$. $f$ is said to be {\em chain transitive} if for any $x,y\in X$ and $\delta>0$, there is a $\delta$-chain $(x_i)_{i=0}^k$ of $f$ with $x_0=x$ and $x_k=y$.
\end{defi}

Let $f\colon X\to X$ be a chain transitive map. A $\delta$-chain $(x_i)_{i=0}^k$ of $f$ is said to be a {\em $\delta$-cycle} of $f$ if $x_0=x_k$, and $k$ is called the {\em length} of the $\delta$-cycle. Let $m=m(\delta)>0$ be the greatest common divisor of the lengths of $\delta$-cycles of $f$. A relation $\sim_\delta$ in $X^2$ is defined by for any $x,y\in X$, $x\sim_\delta y$ iff there is a $\delta$-chain $(x_i)_{i=0}^k$ of $f$ with $x_0=x$, $x_k=y$, and $m|k$. 

\begin{rem}
\normalfont
The following properties hold:
\begin{itemize}
\item[(P1)] $\sim_\delta$ is an open and closed $(f\times f)$-invariant equivalence relation in $X^2$,
\item[(P2)] Any $x,y\in X$ with $d(x,y)\le\delta$ satisfies $x\sim_\delta y$, so for any $\delta$-chain $(x_i)_{i=0}^k$ of $f$, we have $f(x_i)\sim_\delta x_{i+1}$ for each $0\le i\le k-1$, implying $x_i\sim_\delta f^i(x_0)$ for every $0\le i\le k$,
\item[(P3)] For any $x\in X$ and $n\ge0$, $x\sim_\delta f^{mn}(x)$,
\item[(P4)] There exists $N>0$ such that for any $x,y\in X$ with $x\sim_\delta y$ and $n\ge N$, there is a $\delta$-chain $(x_i)_{i=0}^k$ of $f$ with $x_0=x$, $x_k=y$, and $k=mn$.
\end{itemize}
\end{rem}

Fix $x\in X$ and let $D_i$, $i\ge0$, denote the equivalence class of $\sim_\delta$ including $f^i(x)$. Then, $D_m=D_0$, and  $X=\bigsqcup_{i=0}^{m-1}D_i$ gives the partition of $X$ into the equivalence classes of $\sim_\delta$. Note that every $D_i$, $0\le i\le m-1$, is clopen in $X$ and satisfies $f(D_i)=D_{i+1}$. We call $\mathcal{D}^\delta=\{D_i\colon 0\le i\le m-1\}$ the {\em $\delta$-cyclic decomposition} of $X$.

\begin{defi}
\normalfont
The relation $\sim$ in $X^2$ is defined by for any $x,y\in X$, $x\sim y$ iff $x\sim_\delta y$ for every $\delta>0$. It is a closed $(f\times f)$-invariant equivalence relation in $X^2$.
\end{defi}

\begin{rem}
\normalfont
The relations $\sim_\delta$, $\delta>0$, and $\sim$ in $X^2$ have the following properties.
\begin{itemize}
\item[(1)] Given any $\delta_1>\delta_2>0$, $x\sim_{\delta_2} y$ implies $x\sim_{\delta_1} y$ for all $x,y\in X$, so we have $\mathcal{D}^{\delta_1}\prec\mathcal{D}^{\delta_2}$, i.e., for any $A\in\mathcal{D}^{\delta_2}$, there is $B\in\mathcal{D}^{\delta_1}$ such that $A\subset B$.
\item[(2)] We say that a pair of points $(x,y)\in X^2$ is {\em chain proximal} if for any $\delta>0$, there is a pair of $\delta$-chains $((x_i)_{i=0}^k,(y_i)_{i=0}^k)$ of $f$ such that $(x_0,y_0)=(x,y)$ and $x_k=y_k$. It is easy to see that any chain proximal pair $(x,y)\in X^2$ for $f$ satisfies $x\sim_\delta y$ for every $\delta>0$, i.e., $x\sim y$.
\end{itemize}
\end{rem}

As an assumption of the main theorem, we introduce a type of shadowing with respect to the relation $\sim$.

\begin{defi}
\normalfont
Let $f\colon X\to X$ be a chain transitive map. Denote by $\mathcal{D}$ the set of equivalence classes of $\sim$.
\begin{itemize}
\item For $\delta>0$, a sequence $(x_i)_{i\ge0}$ of points in $X$ is called a {\em $(\mathcal{D},\delta)$-pseudo orbit} of $f$ if $f(x_i)\sim x_{i+1}$ and $d(f(x_i),x_{i+1})\le\delta$ for all $i\ge0$.
\item $f$ has the {\em shadowing property along  $\mathcal{D}$} if for any $\epsilon>0$, there is $\delta>0$ such that every $(\mathcal{D},\delta)$-pseudo orbit $(x_i)_{i\ge0}$ of $f$ is {\em $\epsilon$-shadowed} by some $x\in X$ with $x_0\sim x$, i.e., $d(f^i(x),x_i)\le\epsilon$ for all $i\ge0$.
\end{itemize}
\end{defi}

\begin{rem}
\normalfont
In general, by replacing $\mathcal{D}$ with any partition $\mathcal{P}$ of $X$, we can define $(\mathcal{P},\delta)$-pseudo orbits and shadowing property along $\mathcal{P}$. Note that when $\mathcal{P}=\{X\}$, they are the ordinary $\delta$-pseudo orbits and shadowing property.
\end{rem}

Given any $x\in X$, define $\mathcal{D}(x)\in\mathcal{D}$ by $x\in\mathcal{D}(x)$. Note that every $\mathcal{D}(x)$, $x\in X$, is a closed subset of $X$. Denote by $K(X)$ the set of non-empty closed subsets of $X$ endowed with the Hausdorff metric $d_H\colon K(X)\times K(X)\to[0,\infty)$; for any $A,B\in K(X)$,
\[
d_H(A,B)=\inf\{\epsilon>0\colon A\subset B_\epsilon(B), B\subset B_\epsilon(A)\},
\]
where $B_\epsilon(\cdot)$ denotes the $\epsilon$-neighborhood.

The main theorem of this paper is the following.

\begin{thm}
Suppose that a continuous map $f\colon X\to X$ satisfies the following properties:
\begin{itemize}
\item[(1)] $f$ is chain transitive,
\item[(2)] $f$ has the shadowing property along $\mathcal{D}$,
\item[(3)] $\mathcal{D}(\cdot)\colon(X,d)\to(K(X),d_H)$ is continuous,
\item[(4)] $h_{top}(f)>0$, where $h_{top}(\cdot)$ denotes the topological entropy.
\end{itemize}
Then, the following property holds:
\begin{itemize}
\item[(5)] For any $n\ge 2$, there exists $\delta_n>0$ such that for every $D\in\mathcal{D}$,
\[
D^n\cap{\rm DC1}_n^{\delta_n}(X,f)
\]
is a residual subset of $D^n$, i.e., contains a dense $G_\delta$-set of $D^n$.
\end{itemize}
\end{thm}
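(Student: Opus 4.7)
\medskip

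\noindent\textbf{Proof plan.}
For fixed $n\ge 2$ and $D\in\mathcal{D}$, the set $D^n\cap{\rm DC1}_n^{\delta_n}(X,f)$ is automatically a $G_\delta$ in $D^n$: each finite-window condition ``$\max_{i<j}d(f^k(x_i),f^k(x_j))<\epsilon$ on a proportion $>1-\eta$ of $[0,m-1]$'' is open in $X^n$, and the two $\limsup=1$ requirements in the definition of ${\rm DC1}_n^{\delta_n}$ rewrite as countable intersections of countable unions of such open sets over rational parameters. The substantive content is therefore \emph{density}: given $(y_1,\dots,y_n)\in D^n$ and $\gamma>0$, produce a distributionally $n$-$\delta_n$-scrambled tuple $(x_1,\dots,x_n)\in D^n$ within $\gamma$ of $(y_1,\dots,y_n)$.

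The first step, which I expect to be the main obstacle, is to manufacture a ``witness'' tuple: using $h_{top}(f)>0$ together with chain transitivity and the shadowing along $\mathcal{D}$, I produce $\delta_n>0$ and points $a_1,\dots,a_n$ lying in a single equivalence class of $\sim$ such that
\[
\liminf_{m\to\infty}\frac{1}{m}\bigl|\{0\le k\le m-1\colon\min_{1\le i<j\le n}d(f^k(a_i),f^k(a_j))>3\delta_n\}\bigr|=1.
\]
Positive entropy produces, via a Katok-type argument, $N\ge 1$ and a compact $f^N$-invariant set on which $f^N$ factors onto a full shift with enough symbols; selecting $n$ symbolic sequences whose coordinates are pairwise separated on a density-$1$ set of times and noting that the horseshoe lies in finitely many $\sim$-classes (by (P3) and the $(f\times f)$-invariance of $\sim$) lets me place all $n$ witnesses in a common class.

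Now fix $(y_1,\dots,y_n)\in D^n$ and $\gamma>0$, choose $\epsilon\in(0,\gamma/2)$, and let $\delta>0$ be the constant furnished by the shadowing along $\mathcal{D}$. For each $i$ I build a $(\mathcal{D},\delta)$-pseudo orbit $(z_i^{(k)})_{k\ge 0}$ with $z_i^{(0)}=y_i$ by concatenating blocks of two alternating types. A \emph{proximity block} of length $p_\ell$ steers all $n$ pseudo orbits (using chain transitivity together with (P2) and (P4) to respect $\sim$) to within $\epsilon$ of a single reference orbit; a \emph{separation block} of length $s_\ell$ then lets the $i$-th pseudo orbit $\delta$-track the true orbit of $a_i$. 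Short chain-transitive re-entry segments joining the blocks can be arranged inside the relevant $\sim$-classes because the $a_i$ share a class and $\mathcal{D}(\cdot)$ is continuous (hypothesis (3)), making the ``stay inside a class'' operation robust under the $\delta$-slack. Choosing $p_\ell,s_\ell\to\infty$ so fast (in the manner of \cite{LLT}) that each block dominates the total length of all earlier blocks forces both $\limsup$-densities to equal $1$.

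Shadowing along $\mathcal{D}$ finally yields $x_i\in X$ with $x_i\sim y_i$ (so $x_i\in D$) and $d(f^k(x_i),z_i^{(k)})\le\epsilon$ for all $k\ge 0$, hence $d(x_i,y_i)\le\epsilon<\gamma$. The proximity blocks then force $\max_{i<j}d(f^k(x_i),f^k(x_j))<3\epsilon$ and the separation blocks force $\min_{i<j}d(f^k(x_i),f^k(x_j))>\delta_n$ on $k$-sets whose densities tend to $1$, so $(x_1,\dots,x_n)\in{\rm DC1}_n^{\delta_n}(X,f)$ as required. The routine but technically delicate part throughout is the bookkeeping that keeps every constructed pseudo orbit and its shadow inside the correct $\sim$-class so that shadowing along $\mathcal{D}$ can be legitimately invoked; hypothesis (3) is built precisely to make this tractable.
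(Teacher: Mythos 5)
Your overall framing is right: the set $D^n\cap{\rm DC1}_n^{\delta_n}(X,f)$ is indeed a $G_\delta$ in $D^n$, and the entropy/shadowing machinery does produce, as in the paper's Lemma 2.4, a factor of some $(Y,f^a)$ onto a full shift and hence a distal $n$-tuple. (Two caveats on that step: the horseshoe comes from shadowing plus entropy pairs, not a Katok-type argument, which is unavailable for $C^0$ maps; and the reason the witnesses lie in a single $\sim$-class is not ``(P3) and invariance'' but the fact that an entropy pair must project to the diagonal of the factor $X/\!\sim$, which is an odometer or a periodic orbit and so has zero entropy --- this is \cite[Lemma 3.3]{Ka}. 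You also need to transport the distal tuple into \emph{every} class $D$, which the paper does in Lemma 2.5 using minimality of the quotient; your separation blocks implicitly require this but you never arrange it.)

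The genuine gap is in the density step. With the shadowing property along $\mathcal{D}$ you fix one accuracy $\epsilon$ for the whole infinite pseudo-orbit, so during a proximity block the $n$ shadowing points are only guaranteed to be within about $3\epsilon$ of one another --- never closer. Your construction therefore verifies
\[
\limsup_{m\to\infty}\frac{1}{m}\bigl|\{0\le k\le m-1\colon\max_{i<j}d(f^k(x_i),f^k(x_j))<\epsilon'\}\bigr|=1
\]
only for $\epsilon'\ge 3\epsilon$, whereas the definition of ${\rm DC1}_n^{\delta_n}$ demands it for \emph{all} $\epsilon'>0$. You cannot repair this by shrinking $\epsilon$ along successive blocks, because the pseudo-orbit is shadowed once, with a single accuracy tied to a single $\delta$. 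This is exactly why the paper (following \cite{LLT}) does not attempt a direct construction of a scrambled tuple: instead it proves, for each $\sigma>0$ \emph{separately}, that the set $A_n^\sigma(D,f)$ of tuples whose orbits are eventually $\sigma$-close is dense in $D^n$ (Lemma 2.6: one finite chain followed by the tail of a true orbit, corrected into a $(\mathcal{D},\gamma)$-pseudo orbit by Lemma 2.2), likewise that $D_n^{\delta_n}(D,f)$ is dense, and then lets the Baire category theorem intersect the resulting open dense sets over $\sigma=1/l$. The interleaving of proximity and separation that you try to build by hand is performed, for free and without the $\epsilon$-floor, by that intersection. To fix your argument you would either have to adopt this two-separate-densities scheme, or strengthen hypothesis (2) to an asymptotic (limit-type) shadowing, which the theorem does not assume.
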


\begin{rem}
\normalfont
When $f\colon X\to X$ has the properties (1), (2), and (3), $f$ satisfies the shadowing property (see Lemma 2.3 in Section 2). Then, $h_{top}(f)=0$ implies that $X$ is a periodic orbit, or $(X,f)$ is topologically conjugate to an odometer (see \cite[Corollary 6]{Moo} and \cite[Theorem 4.4]{Ku}); therefore, except for these limited cases, $f$ satisfies the property (4).
\end{rem}

Following \cite{LLT}, we recall a simplified version of Mycielski's theorem (\cite[Theorem 1]{My}).  A topological space is said to be {\em perfect} if it has no isolated point. A subset $S$ of $X$ is said to be a {\em Mycielski set} if it is a union of countably many Cantor sets.

\begin{lem}
Let $X$ be a perfect compact metric space. If $R_n$ is a residual subset of $X^n$ for each $n\ge2$, then there is a Mycielski set $S$ which is dense in $X$ and satisfies
$(x_1,x_2,\dots,x_n)\in R_n$ for any $n\ge2$ and distinct $x_1,x_2,\dots,x_n\in S$.
\end{lem}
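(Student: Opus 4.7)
The plan is the classical Mycielski Cantor-scheme argument, enriched to meet every residual constraint $R_n$ while forcing density in $X$. Write each $R_n=\bigcap_{k\ge 1}G_{n,k}$ with $G_{n,k}\subset X^n$ open and dense, and fix a countable base $\{B_m\}_{m\ge 1}$ of the topology. I would produce one Cantor set $C_m\subset B_m$ for each $m$ by a simultaneous construction, and then take $S=\bigcup_{m\ge 1}C_m$.

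By induction on $k$, I would build non-empty open sets $U_{m,\sigma}\subset B_m$, indexed by $1\le m\le k$ and $\sigma\in\{0,1\}^k$, so that for each fixed $m$ the family $(U_{m,\sigma})_\sigma$ extends a standard Cantor scheme of depth $k$ inside $B_m$---with disjoint closures $\overline{U_{m,\sigma 0}},\overline{U_{m,\sigma 1}}\subset U_{m,\sigma}$ and $\operatorname{diam}(U_{m,\sigma})<2^{-k}$---and so that for every $n\le k$ and every $n$ pairwise distinct pairs $(m_1,\sigma_1),\dots,(m_n,\sigma_n)$ at level $k$,
\[
U_{m_1,\sigma_1}\times\cdots\times U_{m_n,\sigma_n}\subset G_{n,k}.
\]
The inductive step at level $k+1$ combines three moves: launch a new Cantor scheme of depth $k+1$ inside $B_{k+1}$ (possible because $B_{k+1}$ contains a Cantor subset, by perfectness of $X$); bisect each existing $U_{m,\sigma}$ into two disjoint non-empty open subsets of smaller diameter (again by perfectness); and then iteratively shrink the new pieces within their parents to enforce the finite list of product constraints for level $k+1$. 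The combinatorial fact that drives the shrinking is that any open dense $G\subset X^n$ meets every non-empty open rectangle $V_1\times\cdots\times V_n$, and so contains a non-empty product sub-rectangle $V_1'\times\cdots\times V_n'$; each product constraint can therefore be secured coordinate-wise, and previously secured constraints are preserved under further shrinking.

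Setting $C_m=\bigcap_{k\ge m}\bigcup_{\sigma\in\{0,1\}^k}\overline{U_{m,\sigma}}$ yields a Cantor set inside $B_m$, and $S=\bigcup_{m\ge 1}C_m$ is a dense Mycielski set. For pairwise distinct $x_1,\dots,x_n\in S$, at every sufficiently large level $k$ the $x_i$ lie in distinct leaves $(m_i,\sigma_i)$---either their indices $m_i$ differ, or the Cantor scheme inside their common $C_m$ separates them by depth $k$---so the product constraint forces $(x_1,\dots,x_n)\in G_{n,k}$ for all such $k$, whence $(x_1,\dots,x_n)\in R_n$. The main obstacle, and the only substantive point, is the book-keeping in the inductive step: the list of product constraints grows combinatorially in $k$, but stays finite at each stage, and each constraint is independently and permanently satisfiable by the coordinate-wise shrinking above; the remainder is the standard Cantor-scheme extraction.
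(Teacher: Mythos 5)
The paper offers no proof of this lemma to compare against: it is quoted from Li--Li--Tu as a simplified form of Mycielski's theorem \cite[Theorem 1]{My} and used as a black box. Your Cantor-scheme construction is the standard proof of exactly this statement, and it is essentially correct: launching a scheme inside each basic open set $B_m$ is what buys density of $S$; the constraints at each stage form a finite list of open rectangles, each of which can be pushed inside the open dense set $G_{n,k}$ by a coordinate-wise shrink that never destroys a constraint already secured (nor the disjointness of closures, nor nonemptiness); and distinct points of $S$ do end up in distinct leaves at all large depths, either because their indices $m$ differ or because the diameters $2^{-k}$ eventually separate them inside a common $C_m$. One small point worth a line: the constraint is stated for the open sets $U_{m_i,\sigma_i}$ while $x_i$ a priori lies only in $\overline{U_{m_i,\sigma_i}}$; this is harmless because $x_i$ lies in the closure of a level-$(k+1)$ child, which your scheme places inside the open level-$k$ parent.

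There is one step you must make explicit for the final deduction to be literally valid. Your argument yields $(x_1,\dots,x_n)\in G_{n,k}$ only for all sufficiently large $k$ (large enough that the relevant schemes exist and the $x_i$ occupy distinct leaves), whereas $R_n=\bigcap_{k\ge1}G_{n,k}$ requires membership for every $k$. The fix is the routine normalization of replacing $G_{n,k}$ by $\bigcap_{j\le k}G_{n,j}$, which is still open and dense, so that the sequence is decreasing in $k$ and membership for large $k$ already gives membership in $R_n$; equivalently, impose at stage $k$ the rectangle constraints for all $G_{n,j}$ with $j\le k$. With that sentence added, the proof is complete.
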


\begin{rem}
\normalfont
Note that every $D\in\mathcal{D}$ is perfect under the property (5). Then, the same property with Lemma 1.1 implies that for every $D\in\mathcal{D}$, there is a dense Mycielski subset $S$ of $D$ which is distributionally $n$-$\delta_n$-scrambled for all $n\ge2$ for some $\delta_n>0$.

Conversely, let $f\colon X\to X$ be a chain transitive map and $S$ be a distributionally $2$-scrambled subset of $X$. Since any $(x,y)\in{\rm DC1}_2(X,f)$ is a {\em proximal pair} for $f$, i.e.,
\[
\liminf_{k\to\infty}d(f^k(x),f^k(y))=0,
\]
we have $x\sim y$ (see Remark 1.2 (2)). This implies $S\subset D$ for some $D\in\mathcal{D}$. Thus, the property (5) ensures a maximal abundance of distributionally scrambled subsets under the chain transitivity.
\end{rem}

The main theorem in \cite{LLT} states that for a non-periodic transitive map $f\colon X\to X$ with the shadowing property, if $f$ has a periodic point, then there is a Mycielski subset $S$ of $X$ which is distributionally $n$-$\delta_n$-scrambled for all $n\ge2$ for some $\delta_n>0$, and moreover if $f$ has a fixed point, there is such $S$ also dense in $X$ (see \cite[Theorem 1.1]{LLT}).

Our Theorem 1.1 includes \cite[Theorem 1.1]{LLT} as explained in the next remark. Note that Theorem 1.1 above  does not exclude the case where $\mathcal{D}$ is an infinite set or $f$ has no periodic point.
 
\begin{rem}
\normalfont
Let $f\colon X\to X$ be a transitive map with the shadowing property. 
The transitivity is equivalent to the chain transitivity under the shadowing. If $f$ has a periodic point with a period $n>0$, then since $\sup_{\delta>0}|\mathcal{D}^\delta|\le n$, $\mathcal{D}=\mathcal{D}^\delta$ for sufficiently small $\delta>0$, so taking $A\in\mathcal{D}$, we have $|\mathcal{D}|\le n$ and
\[
X=\bigsqcup_{i=0}^{|\mathcal{D}|-1}f^i(A),
\]
where $f^i(A)$ is clopen in $X$ for every $0\le i\le|\mathcal{D}|-1$.

In this case, we easily see that $f$ has the properties (2) and (3). Moreover, under the additional assumption that $X$ is not a periodic orbit, (4) is also satisfied (see Remark 1.4). Thus, as shown in Remark 1.5, Theorem 1.1 ensures for every $D\in\mathcal{D}$ the existence of dense Mycielski subset $S$ of $D$ which is distributionally $n$-$\delta_n$-scrambled for all $n\ge2$ for some $\delta_n>0$. Note also that if $f$ has a fixed point, then $\mathcal{D}=\{X\}$.
\end{rem} 

Among various shadowing properties, {\em s-limit shadowing}, introduced in \cite{LS}, is a combination of the shadowing and limit shadowing properties. Its formal definition is given as follows.

\begin{defi}
\normalfont
Let $f\colon X\to X$ be a continuous map and let $\xi=(x_i)_{i\ge0}$ be a sequence of points in $X$.
\begin{itemize}
\item For $\delta>0$, $\xi$ is called a {\em $\delta$-limit-pseudo orbit} of $f$ if $d(f(x_i),x_{i+1})\le\delta$ for all $i\ge0$, and $\lim_{i\to\infty}d(f(x_i),x_{i+1})=0$.
\item For $\epsilon>0$, $\xi$ is said to be {\em $\epsilon$-limit shadowed} by $x\in X$ if $d(f^i(x),x_i)\leq \epsilon$ for all $i\ge 0$, and $\lim_{i\to\infty}d(f^i(x),x_i)=0$.
\item $f$ has the {\em s-limit shadowing property} if for any $\epsilon>0$, there is $\delta>0$ such that every $\delta$-limit-pseudo orbit of $f$ is $\epsilon$-limit shadowed by some point of $X$.
\end{itemize}
\end{defi}

Note that the s-limit shadowing has been proved to be $C^0$-dense in the space of continuous self-maps of compact topological manifolds \cite{MO}. In the next section, we prove the following implication.

\begin{prop}
Let $f\colon X\to X$ be a continuous map.  If $f$ is chain transitive and has the s-limit shadowing property, then $f$ satisfies the properties (2) and (3) in Theorem 1.1. 
\end{prop}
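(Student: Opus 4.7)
The plan is to prove (3) first and then (2), using s-limit shadowing in both (noting that s-limit shadowing immediately implies ordinary shadowing via a standard compactness trick on sequences that continue a $\delta$-pseudo orbit along a true orbit past some index $N$).

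For (3), upper semi-continuity of $\mathcal{D}(\cdot)\colon X\to K(X)$ is immediate from the closedness of $\sim$: if $x_n\to x$, $y_n\to y$, and $y_n\sim x_n$, then $y\sim x$. For lower semi-continuity, given $y\sim x$ and $x_n\to x$, I construct $y_n\sim x_n$ with $y_n\to y$ as follows. Since $y\sim x$, for each $\gamma_n\to 0$ there is, by (P4), a $\gamma_n$-chain from $y$ to $x$ of length $K_n$ divisible by $m(\gamma_n)$. Appending the forward orbit of $x_n$ after this chain yields a $\delta_n$-limit-pseudo orbit whose junction jump is controlled by $\gamma_n+d(f(x),f(x_n))\to 0$ (using continuity of $f$ and $x_n\to x$). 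By s-limit shadowing this is limit-shadowed by some $y_n$ with $d(y_n,y)\le\epsilon$, and the asymptotic part forces $f^{K_n}(y_n)$ to be forward asymptotic to $x_n$, hence a chain proximal pair, hence $\sim$-equivalent to $x_n$. Choosing $K_n$ compatibly with the cyclic decomposition via (P3) then transports this to $y_n\sim x_n$.

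For (2), given $\epsilon>0$, apply s-limit shadowing to obtain $\eta$, and use (3) to find $\delta_1$ with $d(x,y)<\delta_1\Rightarrow d_H(\mathcal{D}(x),\mathcal{D}(y))<\eta/2$; take the pseudo-orbit tolerance $\delta$ still smaller. Given a $(\mathcal{D},\delta)$-pseudo orbit $(x_i)$, the condition $f(x_i)\sim x_{i+1}$ allows inserting between $f(x_i)$ and $x_{i+1}$ an interpolating chain of tolerance $\delta_i\to 0$, yielding a $\delta$-limit-pseudo orbit $(\tilde x_j)$ in which the original $x_i$ appear at a subsequence of indices. By s-limit shadowing there is a point $z$ with $d(f^j(z),\tilde x_j)$ bounded by $\eta$ and tending to $0$. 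The asymptotic shadowing, combined with the continuity of $\mathcal{D}$ from (3) and the cyclic structure, forces $z\sim x_0$; choosing the insertion lengths to be multiples of an appropriate cyclic period via (P4) aligns the $x_i$ with the iterates of $z$, producing the required $\epsilon$-shadowing of $(x_i)$ in the original indexing.

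The main obstacle will be (2): the indexing shift caused by the chain insertions must be reconciled with the original time parameterization of $(x_i)$. Overcoming this requires orchestrating (P3), (P4), the asymptotic conclusion of s-limit shadowing, and the continuity of $\mathcal{D}$ from (3) so that the shadowing point lies in $\mathcal{D}(x_0)$ while its orbit tracks $(x_i)$ to within $\epsilon$ in the original indexing, rather than merely tracking the reindexed $(\tilde x_j)$.
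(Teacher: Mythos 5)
Both halves of your plan contain genuine gaps, and in each case the missing idea is the same: you must arrange for the limit-shadowed point's orbit to be asymptotic to a true orbit \emph{at matching time indices}, and your constructions introduce time shifts that you cannot remove.

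In your argument for (3), the chain from $y$ to $x$ has length $K_n$, so the shadowing point $y_n$ satisfies only that $f^{K_n}(y_n)$ is asymptotic to $x_n$, whence $f^{K_n}(y_n)\sim x_n$. To transport this to $y_n\sim x_n$ you need $y_n\sim f^{K_n}(y_n)$, i.e.\ $m(\gamma)\mid K_n$ for \emph{every} $\gamma>0$; choosing $K_n$ divisible by $m(\gamma_n)$ only handles $\gamma\ge\gamma_n$. When the $\sim$-factor is an infinite odometer (a case the theorem explicitly allows), the $m(\gamma)$ are unbounded and no finite $K_n$ works, so "(P3) transports this" fails. The paper's fix is to aim the $\delta$-chain of length $mN$ at the point $f^{mN}(y_k)$ \emph{on the forward orbit} of the nearby point, so that the appended true orbit occupies index $i$ with the point $f^i(y_k)$; the shadowing point $w$ then satisfies $d(f^i(y_k),f^i(w))\to0$ with matching exponents, and $w\sim y_k$ follows from chain proximality with no divisibility condition at all.

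For (2) the problem is worse: inserting interpolating chains between $f(x_i)$ and $x_{i+1}$ reindexes the sequence, so limit shadowing gives control of $d(f^{j_i}(z),x_i)$ where $j_i-i\to\infty$, and there is no way to "align the $x_i$ with the iterates of $z$" afterwards --- the required conclusion $d(f^i(z),x_i)\le\epsilon$ is in the original parameterization, and you offer no mechanism for recovering it. The insertions are also unnecessary: a $(\mathcal{D},\delta)$-pseudo orbit is already a $\delta$-pseudo orbit with its given indexing. The paper instead truncates it at time $n$ and continues with the true orbit of $x_n$, producing a $\delta$-limit-pseudo orbit $\xi_n$ with \emph{no} reindexing; each $\xi_n$ is $\epsilon$-limit shadowed by some $z_n$ with $x_0\sim z_n$ (this uses $x_n\sim f^n(x_0)$ together with the asymptotic, hence chain proximal, relation between $x_n$ and $f^n(z_n)$), and a subsequential limit of the $z_n$ is the desired shadowing point in $\mathcal{D}(x_0)$. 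Note also that the paper proves (2) without invoking (3), whereas your route makes (2) depend on (3); that dependence is harmless in principle, but the reindexing obstruction you yourself flag as "the main obstacle" is not a technicality to be orchestrated away --- it is the reason this approach does not close.
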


This proposition indicates a naturalness of the properties (2) and (3) in Theorem 1.1. By Theorem 1.1, we obtain the following corollary. 

\begin{cor}
Suppose that a continuous map $f\colon X\to X$ is chain transitive and has the s-limit shadowing property. If $h_{top}(f)>0$, then $f$ satisfies the property (5) in Theorem 1.1.
\end{cor}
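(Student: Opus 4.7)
The plan is to deduce the corollary directly from Theorem 1.1 by verifying each of its four hypotheses under the present assumptions. I would check them one by one: hypothesis (1) (chain transitivity) and hypothesis (4) ($h_{top}(f)>0$) are part of the hypotheses of the corollary and require no further argument; hypotheses (2) (shadowing along $\mathcal{D}$) and (3) (continuity of $\mathcal{D}(\cdot)\colon (X,d)\to(K(X),d_H)$) I would obtain from Proposition 1.1, whose conclusion is calibrated precisely to produce these two properties from chain transitivity together with s-limit shadowing. With all four hypotheses in hand, Theorem 1.1 immediately yields property (5), which is exactly what the corollary asserts.

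I do not anticipate any independent obstacle inside the corollary itself: its content is just the composition of two results already established in the paper, with s-limit shadowing serving as a more tractable substitute for the intrinsic condition ``shadowing along $\mathcal{D}$.'' The substantive work happens elsewhere — in the proof of Proposition 1.1, where one must promote $\delta$-limit-pseudo orbits that respect $\sim$ to genuine orbits and verify that the equivalence classes of $\sim$ vary continuously in the Hausdorff metric, and more deeply in Theorem 1.1, which combines the Mycielski-type genericity from Lemma 1.1 with the positive-entropy hypothesis to build distributionally $n$-$\delta_n$-scrambled residual sets inside every $D\in\mathcal{D}$. Thus the only ``step'' in the proof of the corollary is the invocation \emph{Proposition 1.1 $\Rightarrow$ (2),(3); then Theorem 1.1 $\Rightarrow$ (5)}.
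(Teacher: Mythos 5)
Your proposal is correct and is exactly the paper's argument: the paper states Corollary 1.1 immediately after Proposition 1.1 with the remark ``By Theorem 1.1, we obtain the following corollary,'' i.e., hypotheses (1) and (4) are assumed, (2) and (3) come from Proposition 1.1, and Theorem 1.1 then yields (5). Nothing further is needed.
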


\begin{rem}
\normalfont
In \cite{GOP}, an example is given of a continuous map $f\colon X\to X$ with  the following properties:
\begin{itemize}
\item[(1)] $f$ is transitive and has a fixed point,
\item[(2)] $f$ has the shadowing property,
\item[(3)] $f$ does not have the limit shadowing property.
\end{itemize}
The property $(1)$ implies that $f$ is chain transitive and satisfies $\mathcal{D}=\{X\}$. By this and (2), $f$ has the properties (2) and (3) in Theorem 1.1, obviously. However, since the s-limit shadowing implies the limit shadowing  (see \cite[Theorem 3.7]{BGO}), by (3), $f$ does not have the s-limit shadowing property. 
\end{rem}

We present one more corollary. Given a continuous map $f\colon X\to X$, an $x\in X$ is said to be a {\em chain recurrent point} for $f$ if for any $\delta>0$, there is a $\delta$-chain $(x_i)_{i=0}^k$ of $f$ with $x_0=x_k=x$. We say that $f$ is {\em chain recurrent} if every $x\in X $ is a chain recurrent point for $f$. When $f$ is chain recurrent, it is well-known that $X$ admits a partition into the {\em chain components} for $f$. 

Suppose that $f$ is chain recurrent and has the s-limit shadowing property. Let $\mathcal{C}(f)$ be the set of chain components for $f$. Then, we see that for any $C\in\mathcal{C}(f)$, $f|_C\colon C\to C$ is chain transitive and also has the s-limit shadowing property. Moreover, if $h_{top}(f)>0$, then there is an ergodic $f$-invariant Borel probability measure $\mu$ on $X$ such that $h_{\mu}(f)>0$. Note that $f|_{{\rm supp}(\mu)}\colon{\rm supp}(\mu)\to{\rm supp}(\mu)$ is transitive. By taking $C\in\mathcal{C}(f)$ with ${\rm supp}(\mu)\subset C$, we obtain $h_{top}(f|_C)>0$. Thus, by applying Corollary 1.1 to $f|_C$, we obtain the following (see also Remark 1.5).

\begin{cor}
Suppose that a continuous map $f\colon X\to X$ is chain recurrent and has the s-limit shadowing property. If $h_{top}(f)>0$, then there is a Mycielski subset $S$ of $X$ which is distributionally $n$-$\delta_n$-scrambled for all $n\ge2$ for some $\delta_n>0$.
\end{cor}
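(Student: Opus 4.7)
The plan is to reduce Corollary 1.2 to Corollary 1.1 by restricting $f$ to a single well-chosen chain component. Since $f$ is chain recurrent, $X$ partitions into chain components $\mathcal{C}(f)$, each of which is closed and (strongly) $f$-invariant. For each $C\in\mathcal{C}(f)$, the restriction $f|_C\colon C\to C$ is chain transitive by the very definition of a chain component, and it also inherits the s-limit shadowing property: given $\varepsilon>0$, take the modulus $\delta$ supplied by s-limit shadowing of $f$ on $X$; any $\delta$-limit-pseudo orbit lying in $C$ is also a $\delta$-limit-pseudo orbit in $X$, and its $\varepsilon$-limit-shadowing point $x\in X$ satisfies $d(f^i(x),C)\to 0$. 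Since distinct chain components are at positive distance in the compact space $X$ and $f^i(x)$ remains in the component of $x$ by invariance, $x$ must lie in $C$.

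To locate a component with positive topological entropy, invoke the variational principle: $h_{top}(f)>0$ yields an ergodic $f$-invariant Borel probability measure $\mu$ on $X$ with $h_\mu(f)>0$. Ergodicity of $\mu$ makes $f|_{\mathrm{supp}(\mu)}$ transitive, hence chain transitive, so $\mathrm{supp}(\mu)$ is contained in a single component $C\in\mathcal{C}(f)$. Viewing $\mu$ as an invariant measure for $f|_C$ gives
\[
h_{top}(f|_C)\ge h_\mu(f|_C)=h_\mu(f)>0.
\]
The map $f|_C\colon C\to C$ is therefore chain transitive, has the s-limit shadowing property, and has positive topological entropy, so Corollary 1.1 applies: it produces $\delta_n>0$ for every $n\ge 2$ such that property (5) of Theorem 1.1 holds for $f|_C$. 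By Remark 1.5 (equivalently, by direct application of Lemma 1.1), there is a dense Mycielski subset $S\subset C\subset X$ which is distributionally $n$-$\delta_n$-scrambled for $f|_C$, and hence for $f$, for every $n\ge 2$.

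The main obstacle is the inheritance of s-limit shadowing by $f|_C$, because the shadowing point is produced in $X$ and must be shown to land inside $C$. The two essential ingredients are the strong invariance of chain components of a chain recurrent map and the positive separation of distinct components in the compact space $X$; together they force the limit-shadowing point to remain in $C$. Once this descent is in place, the remaining steps---extracting an ergodic invariant measure with positive entropy supported in a single chain component and invoking Corollary 1.1 there---are essentially bookkeeping.
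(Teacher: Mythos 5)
Your proof is correct and follows essentially the same route as the paper: restrict to the chain component containing the support of an ergodic measure of positive entropy, check that $f|_C$ is chain transitive with s-limit shadowing, and apply Corollary 1.1 together with Remark 1.5. The one detail you elaborate that the paper leaves implicit---that the limit-shadowing point must land in $C$ because its orbit stays in a single (forward-invariant, closed) chain component whose distance to the disjoint closed set $C$ would otherwise be positive---is a valid and welcome addition.
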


\section{Proof of the main results}

In this section, we prove Theorem 1.1 and also Proposition 1.1. Throughout this section, $f\colon X\to X$ is a chain transitive continuous map and $\mathcal{D}$ denotes the set of equivalence classes of $\sim$. For any $x\in X$, $\mathcal{D}(x)\in\mathcal{D}$ is defined by $x\in\mathcal{D}(x)$. Similarly, for every $\delta$-cyclic decomposition $\mathcal{D}^\delta$ of $X$, $\delta>0$, we define $\mathcal{D}^\delta(x)\in\mathcal{D}^\delta$ by $x\in\mathcal{D}^\delta(x)$ for all $x\in X$. Note that every $\mathcal{D}^\delta(x)$, $x\in X$, is a clopen subset of $X$. For any $A\subset X$ and $\epsilon>0$, put  $B_{\epsilon}(A)=\{x\in X\colon d(x,A)<\epsilon\}$, the $\epsilon$-neighborhood of $A$.

\begin{rem}
\normalfont
We have the following properties:
\begin{itemize}
\item[(1)] Given any $x\in X$, by the definition of $\sim$, $\mathcal{D}(x)=\bigcap_{\delta>0}\mathcal{D}^\delta(x)$, so for every $\epsilon>0$, there is $\delta>0$ such that $\mathcal{D}^\delta(x)\subset B _{\epsilon}(\mathcal{D}(x))$,
\item[(2)] Since $\sim$ is a closed relation in $X^2$, for any $x\in X$ and $\epsilon>0$, there is $\delta>0$ such that $d(x,y)<\delta$ implies $\mathcal{D}(y)\subset B_{\epsilon}(\mathcal{D}(x))$ for all $y\in X$. 
\end{itemize}
\end{rem}

The first lemma translates the property (3) in Theorem 1.1 into an equivalent property of $\mathcal{D}$.

\begin{lem}
The following properties are equivalent:
\begin{itemize}
\item[(1)] $\mathcal{D}(\cdot)\colon(X,d)\to(K(X),d_H)$ is continuous,
\item[(2)] For any $\epsilon>0$, there is $\delta>0$ such that $\mathcal{D}^\delta(x)\subset B_\epsilon(\mathcal{D}(x))$ for all $x\in X$.
\end{itemize}
\end{lem}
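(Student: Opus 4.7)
The plan is to prove the two implications separately, exploiting the fact that $\mathcal{D}(x)=\bigcap_{\delta>0}\mathcal{D}^\delta(x)$ (Remark 2.1(1)), so that $\mathcal{D}(x)\subset\mathcal{D}^\delta(x)$ for every $\delta>0$ and every $x$. The content of (2) is precisely that this outer approximation of $\mathcal{D}(x)$ by $\mathcal{D}^\delta(x)$ is \emph{uniform} in $x$, and I expect this uniformity to match exactly the (uniform, by compactness of $X$) continuity of $\mathcal{D}(\cdot)$.

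For (2)$\Rightarrow$(1), given $\epsilon>0$, I apply (2) to fix $\delta>0$ with $\mathcal{D}^\delta(z)\subset B_\epsilon(\mathcal{D}(z))$ for every $z\in X$. The key observation is that if $d(x,y)\le\delta$, then property (P2) of Remark 1.2 gives $x\sim_\delta y$, hence $\mathcal{D}^\delta(x)=\mathcal{D}^\delta(y)$. Consequently both $\mathcal{D}(x)$ and $\mathcal{D}(y)$ lie in the common clopen set $\mathcal{D}^\delta(x)=\mathcal{D}^\delta(y)$, which is itself contained in $B_\epsilon(\mathcal{D}(x))\cap B_\epsilon(\mathcal{D}(y))$. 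Therefore $d_H(\mathcal{D}(x),\mathcal{D}(y))\le\epsilon$, yielding (uniform) continuity of $\mathcal{D}(\cdot)$.

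For (1)$\Rightarrow$(2), I would argue by contradiction. If (2) fails for some $\epsilon>0$, then for each $n$ one can choose $x_n\in X$ and $y_n\in\mathcal{D}^{1/n}(x_n)$ with $d(y_n,\mathcal{D}(x_n))\ge\epsilon$. After passing to a subsequence, assume $x_n\to x$ and $y_n\to y$. For any fixed $\delta>0$, once $1/n<\delta$, Remark 1.3(1) gives $y_n\sim_\delta x_n$; since $\sim_\delta$ is closed by (P1), the limit satisfies $y\sim_\delta x$. As $\delta>0$ was arbitrary, $y\sim x$, i.e., $y\in\mathcal{D}(x)$. Continuity of $\mathcal{D}(\cdot)$ then provides $z_n\in\mathcal{D}(x_n)$ with $z_n\to y$, so $d(y_n,z_n)\to 0$, contradicting $d(y_n,\mathcal{D}(x_n))\ge\epsilon$.

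The argument is largely mechanical once two ideas are isolated: in the forward direction, that a $\delta$-ball in $X$ sits inside a single $\sim_\delta$-class by (P2); and in the reverse direction, that the approximate relations $y_n\sim_{1/n}x_n$ can be upgraded to $y\sim x$ in the limit by combining the monotonicity in Remark 1.3(1) with closedness of each $\sim_\delta$. This limiting step is the most delicate point and the place where the structural content of Remarks 1.2--1.3 is really used; I anticipate it to be the only moment requiring care.
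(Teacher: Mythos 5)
Your proof is correct and follows essentially the same route as the paper: a direct estimate via a common $\sim_\delta$-class for $(2)\Rightarrow(1)$, and a compactness/contradiction argument with a convergent subsequence for $(1)\Rightarrow(2)$. The only (harmless) variation is in how the contradiction is closed in $(1)\Rightarrow(2)$: you use the \emph{closedness} of each $\sim_\delta$ together with monotonicity to conclude $y\in\mathcal{D}(x)$ and then invoke continuity of $\mathcal{D}(\cdot)$ to contradict the $\epsilon$-separation, whereas the paper first uses continuity to get $y\notin\mathcal{D}(x)$ and then contradicts $y_k\in\mathcal{D}^{\delta_k}(x_k)$ via the \emph{openness} of the $\sim_\delta$-classes; both are equivalent uses of property (P1).
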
 

\begin{proof}
The implication $(1)\Rightarrow(2)\colon$ Assume the contrary, then, there are $\epsilon>0$, a sequence $0<\delta_1>\delta_2>\cdots\to0$, and $x_k\in X$, $k\ge1$, such that for each $k\ge1$, we have $y_k\in\mathcal{D}^{\delta_k}(x_k)$ with $d(y_k,\mathcal{D}(x_k))\ge\epsilon$. We may assume $\lim_{k\to\infty}(x_k,y_k)=(x,y)$ for some $(x,y)\in X^2$. From the continuity of $\mathcal{D}(\cdot)$, it follows that $d(y,\mathcal{D}(x))\ge\epsilon$ and so $\mathcal{D}(x)\ne\mathcal{D}(y)$. Then, $\mathcal{D}^\delta(x)\ne\mathcal{D}^\delta(y)$ for some $\delta>0$. Taking $k\ge1$ with $(x_k,y_k)\in\mathcal{D}^\delta(x)\times\mathcal{D}^\delta(y)$ and $\delta_k<\delta$, we obtain
\[
\mathcal{D}^{\delta_k}(x_k)\cap\mathcal{D}^{\delta_k}(y_k)\subset\mathcal{D}^\delta(x_k)\cap\mathcal{D}^\delta(y_k)=\mathcal{D}^\delta(x)\cap\mathcal{D}^\delta(y)=\emptyset,
\]
because $\mathcal{D}^{\delta}\prec\mathcal{D}^{\delta_k}$ (see Remark 1.2 (1)). This implies $\mathcal{D}^{\delta_k}(x_k)\ne\mathcal{D}^{\delta_k}(y_k)$, that is, $y_k\notin\mathcal{D}^{\delta_k}(x_k)$, a contradiction; therefore, $(1)\Rightarrow(2)$ has been proved.
$ $\newline
$(2)\Rightarrow(1)\colon$ Fix $x\in X$ and for any $\epsilon>0$, take $\delta>0$ as in the property (2). Then, for every $y\in\mathcal{D}^\delta(x)$, we have
\[
\mathcal{D}(x)\subset\mathcal{D}^\delta(x)=\mathcal{D}^\delta(y)\subset B_\epsilon(\mathcal{D}(y))
\] 
and
\[
\mathcal{D}(y)\subset\mathcal{D}^\delta(y)=\mathcal{D}^\delta(x)\subset B_\epsilon(\mathcal{D}(x)),
\]
so $d_H(\mathcal{D}(x),\mathcal{D}(y))\le\epsilon$. Note that $\mathcal{D}^\delta(x)$ is a neighborhood of $x$ in $X$. Since $\epsilon>0$ is arbitrary, $\mathcal{D}(\cdot)$ is continuous at $x\in X$. Since $x\in X$ is also arbitrary, $\mathcal{D}(\cdot)$ is continuous.
\end{proof}

The second lemma concerns a uniform approximation of pseudo orbits by those along $\mathcal{D}$. 

\begin{lem}
Suppose that $f$ has the property (3) in Theorem 1.1. Then, for any $\gamma>0$, there exists $\delta>0$ such that for every $\delta$-pseudo orbit $(x_i)_{i\ge0}$ of $f$, there is a $(\mathcal{D},\gamma)$-pseudo orbit $(y_i)_{i\ge0}$ of $f$ with $x_0=y_0$ and $\sup_{i\ge0}d(x_i,y_i)<\gamma$.
\end{lem}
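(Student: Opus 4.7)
The plan is to avoid an iterative construction---which would require quantitative control of the uniform-continuity modulus of $f$ against the constant produced by Lemma 2.1---and instead to define each $y_i$ directly as a point of the target equivalence class $\mathcal{D}(f^i(x_0))$ close to $x_i$. Since membership $y_i\in\mathcal{D}(f^i(x_0))$ automatically yields $y_i\sim f^i(x_0)$, the equivalence condition $f(y_i)\sim y_{i+1}$ of a $(\mathcal{D},\gamma)$-pseudo orbit will come for free, so only the metric bounds need to be verified.

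The key preliminary observation is property (P2) of Remark 1.1: any $\delta$-pseudo orbit $(x_i)_{i\ge 0}$ satisfies $x_i\sim_\delta f^i(x_0)$, hence $x_i\in\mathcal{D}^\delta(f^i(x_0))$, for every $i\ge 0$. Given $\gamma>0$, I will first use uniform continuity of $f$ to pick $\epsilon\in(0,\gamma/3)$ so that $d(u,v)\le\epsilon$ implies $d(f(u),f(v))\le\gamma/3$, and then invoke Lemma 2.1 (valid under hypothesis (3) of Theorem 1.1) to obtain $\delta_1>0$ with $\mathcal{D}^{\delta_1}(x)\subset B_\epsilon(\mathcal{D}(x))$ for all $x\in X$. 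Setting $\delta:=\min\{\delta_1,\gamma/3\}$, I will associate to any $\delta$-pseudo orbit $(x_i)_{i\ge 0}$ the sequence defined by $y_0:=x_0$ and, for each $i\ge 1$, by choosing any $y_i\in\mathcal{D}(f^i(x_0))$ with $d(x_i,y_i)<\epsilon$; such a $y_i$ exists because $x_i\in\mathcal{D}^\delta(f^i(x_0))\subset\mathcal{D}^{\delta_1}(f^i(x_0))\subset B_\epsilon(\mathcal{D}(f^i(x_0)))$.

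The metric estimate $d(f(y_i),y_{i+1})\le\gamma$ will then follow by splitting this distance as $d(f(y_i),f(x_i))+d(f(x_i),x_{i+1})+d(x_{i+1},y_{i+1})$ and bounding the three summands by $\gamma/3$ (from $d(x_i,y_i)<\epsilon$ and the choice of $\epsilon$), $\delta\le\gamma/3$ (from the pseudo-orbit hypothesis), and $\epsilon<\gamma/3$ (by construction), respectively. The uniform bound $\sup_{i\ge 0}d(x_i,y_i)<\epsilon<\gamma$ is immediate, and $y_0=x_0$ by definition. I do not foresee a serious obstacle; the only substantive choice is to define $y_i$ directly from $x_i$ rather than inductively from $y_{i-1}$, so that the Lemma 2.1 projection error is pinned at the fixed level $\epsilon$ for all $i$, independent of how the modulus of continuity of $f$ behaves.
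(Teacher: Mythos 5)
Your proposal is correct and follows essentially the same route as the paper: both use property (P2) to place $x_i$ in $\mathcal{D}^\delta(f^i(x_0))$, apply Lemma 2.1 to pick $y_i\in\mathcal{D}(f^i(x_0))$ within a fixed distance of $x_i$ (so the relation $f(y_i)\sim y_{i+1}$ follows from $(f\times f)$-invariance of $\sim$), and conclude with the same three-term triangle inequality. The only cosmetic difference is bookkeeping of the constants ($\delta=\min\{\delta_1,\gamma/3\}$ versus taking $\delta<\gamma/3$ directly).
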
  

\begin{proof}
Take $0<\beta<\gamma/3$ such that $d(a,b)<\beta$ implies $d(f(a),f(b))<\gamma/3$ for all $a,b\in X$. By Lemma 2.1, the property (3) in Theorem 1.1 is equivalent to the property (2) in Lemma 2.1. Then, it gives $0<\delta<\gamma/3$ such that $\mathcal{D}^\delta(x)\subset B_\beta(\mathcal{D}(x))$ for all $x\in X$.

Let $(x_i)_{i\ge0}$ be a $\delta$-pseudo orbit of $f$. By the property (P2) of $\sim_\delta$, we have $x_i\sim_\delta f^i(x_0)$ for all $i\ge0$ (see Remark 1.1). Put $y_0=x_0$. For each $i>0$, since $x_i\in\mathcal{D}^\delta(f^i(x_0))\subset B_\beta(\mathcal{D}(f^i(x_0)))$, there is $y_i\in\mathcal{D}(f^i(x_0))$ such that $d(x_i,y_i)<\beta$. Fix such $(y_i)_{i\ge0}$. Given any $i\ge0$, since $\sim$ is $(f\times f)$-invariant, $f(y_i)\sim f^{i+1}(x_0)$. Combining this with $y_{i+1}\sim f^{i+1}(x_0)$, we obtain $f(y_i)\sim y_{i+1}$. Also, for each $i\ge0$, it holds that
\begin{align*}
d(f(y_i),y_{i+1})&\le d(f(y_i),f(x_i))+d(f(x_i),x_{i+1})+d(x_{i+1},y_{i+1})\\
&<\gamma/3+\delta+\beta<\gamma.
\end{align*}
Since $\sup_{i\ge0}d(x_i,y_i)\le\beta<\gamma$, $(y_i)_{i\ge0}$ gives the desired $(\mathcal{D},\gamma)$-pseudo orbit of $f$. 
\end{proof}

The following lemma is mentioned in Section 1.

\begin{lem}
If $f$ has the properties (2) and (3) in Theorem 1.1, then $f$ has the shadowing property.
\end{lem}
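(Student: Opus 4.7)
The plan is to prove the shadowing property by combining property (2)—shadowing along $\mathcal{D}$—with the approximation result of Lemma 2.2, which converts ordinary $\delta$-pseudo orbits into nearby $(\mathcal{D},\gamma)$-pseudo orbits. The basic idea is that an arbitrary pseudo orbit is close to a pseudo orbit along $\mathcal{D}$, which by hypothesis is shadowed, and so the original pseudo orbit is also shadowed (with slightly worse constants).

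More concretely, given $\epsilon>0$, I would first apply property (2) to the value $\epsilon/2$ in order to obtain $\gamma_0>0$ such that every $(\mathcal{D},\gamma_0)$-pseudo orbit is $(\epsilon/2)$-shadowed by some $x\in X$. Set $\gamma=\min\{\gamma_0,\epsilon/2\}$, and then invoke Lemma 2.2 for this $\gamma$ to obtain $\delta>0$ such that for every $\delta$-pseudo orbit $(x_i)_{i\ge 0}$ of $f$, there exists a $(\mathcal{D},\gamma)$-pseudo orbit $(y_i)_{i\ge 0}$ with $y_0=x_0$ and $\sup_{i\ge 0}d(x_i,y_i)<\gamma\le\epsilon/2$.

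This $\delta$ will be the desired constant: pick any $\delta$-pseudo orbit $(x_i)$, pass to its approximating $(\mathcal{D},\gamma)$-pseudo orbit $(y_i)$ via Lemma 2.2, and apply property (2) to get $x\in X$ with $d(f^i(x),y_i)\le\epsilon/2$ for all $i\ge 0$. The triangle inequality
\[
d(f^i(x),x_i)\le d(f^i(x),y_i)+d(y_i,x_i)<\epsilon/2+\epsilon/2=\epsilon
\]
then shows that $x$ is an $\epsilon$-shadowing point for the original pseudo orbit, which establishes the shadowing property.

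I do not expect any real obstacle here, since both ingredients are already in place: Lemma 2.2 transfers control from arbitrary pseudo orbits to $(\mathcal{D},\gamma)$-pseudo orbits, and property (2) handles the latter. The only point requiring a little care is to choose the intermediate constant $\gamma$ small enough to accommodate both the shadowing tolerance from property (2) and the uniform closeness bound from Lemma 2.2, which is achieved by taking $\gamma=\min\{\gamma_0,\epsilon/2\}$ as above. The extra clause in the definition of shadowing along $\mathcal{D}$ requiring $y_0\sim x$ plays no role here, since the conclusion we want—ordinary shadowing—only asks for the distance inequality.
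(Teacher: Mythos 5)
Your proposal is correct and follows essentially the same route as the paper: choose $\gamma<\epsilon/2$ small enough that $(\mathcal{D},\gamma)$-pseudo orbits are $\epsilon/2$-shadowed, apply Lemma 2.2 to get $\delta$, and conclude by the triangle inequality. The only cosmetic difference is your explicit $\gamma=\min\{\gamma_0,\epsilon/2\}$, which matches the paper's choice of $0<\gamma<\epsilon/2$ satisfying the shadowing-along-$\mathcal{D}$ tolerance.
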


\begin{proof}
Given any $\epsilon>0$, take $0<\gamma<\epsilon/2$ such that every $(\mathcal{D},\gamma)$-pseudo orbit of $f$ is $\epsilon/2$-shadowed by some point of $X$. For this $\gamma$, take $\delta>0$ as in Lemma 2.2.

Then, for every $\delta$-pseudo orbit $\xi=(x_i)_{i\ge0}$ of $f$, we have a $(\mathcal{D},\gamma)$-pseudo orbit $(y_i)_{i\ge0}$ of $f$ with $\sup_{i\ge0}d(x_i,y_i)<\gamma$, which is $\epsilon/2$-shadowed by some $z\in X$. It follows that
\[
d(f^i(z),x_i)\le d(f^i(z),y_i)+d(y_i,x_i)<\epsilon/2+\gamma<\epsilon
\]
for all $i\ge0$, that is, $\xi$ is $\epsilon$-shadowed by $z$. Since $\epsilon>0$ is arbitrary, $f$ has the shadowing property.
\end{proof}

For $n\ge 2$ and $r>0$, we say that an $n$-tuple $(x_1,x_2,\dots,x_n)\in X^n$ is {\em $r$-distal} if
\[
\inf_{k\ge0}\min_{1\le i<j\le n}d(f^k(x_i),f^k(x_j))\ge r.
\]
While the next lemma can be proved by similar arguments as in the proof of \cite[Lemma 3.3]{LLT} and also \cite[Theorem 1.2]{Ka}, we outline its proof. Note that under the assumption that $f$ is chain transitive, the relation $\sim$ defined in \cite{Ka} coincides with the relation $\sim$ in this paper.
 
\begin{lem}
If $f$ has the shadowing property and satisfies $h_{top}(f)>0$, then for any $n\ge2$, there are $r_n>0$ and an $r_n$-distal $n$-tuple $(x_1,x_2,\dots,x_n)\in X^n$ such that $\{x_1,x_2,\dots,x_n\}\subset D_n$ for some $D_n\in\mathcal{D}$.
\end{lem}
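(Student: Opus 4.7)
The plan is to build a topological horseshoe inside a single $\mathcal{D}$-class by combining positive topological entropy with the shadowing property, and then to extract $n$ mutually disjoint periodic orbits from it.

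First, I would use $h_{top}(f) > 0$ to locate an entropy pair $(a, b) \in X^2$ with $a \neq b$. Under chain transitivity such a pair is automatically proximal, hence chain proximal, so $a \sim b$ and both lie in a common element of $\mathcal{D}$. Standard entropy-pair machinery combined with shadowing (as in \cite[Lemma 3.3]{LLT}) then provides, for any prescribed $n \geq 2$, an integer $N \geq 1$, a constant $r_n > 0$, and pairwise disjoint closed neighborhoods $V_1, \ldots, V_n$ clustered around $a$ and $b$ with $d(V_i, V_j) \geq 3 r_n$ for $i \neq j$, enjoying the horseshoe property: for every symbolic sequence $\omega \in \{1, \ldots, n\}^{\mathbb{Z}_+}$ and every $\delta > 0$, there is a $\delta$-pseudo orbit of $f^N$ whose $k$-th point lies in $V_{\omega_k}$.

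Applying shadowing to the $n$ constant sequences $\omega^{(i)} \equiv i$, and upgrading the resulting approximately periodic shadowing orbits to genuine periodic points in the standard way, yields $f^N$-periodic points $p_1, \ldots, p_n$ with $\mathrm{orb}_{f^N}(p_i) \subset B_{r_n}(V_i)$. The finite $f$-orbits of the $p_i$'s then live inside disjoint $r_n$-neighborhoods of the $V_i$'s, so they are pairwise separated by at least $r_n$, and the $n$-tuple $(p_1, \ldots, p_n)$ is $r_n$-distal.

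The main obstacle is verifying that $\{p_1, \ldots, p_n\}$ can be arranged inside a common $D_n \in \mathcal{D}$. For this I would argue directly that $p_i \sim_\delta p_j$ for every $\delta > 0$: given $\delta$, chain transitivity supplies some $\delta$-chain from $p_i$ to $p_j$, and one then concatenates at either end enough $f^N$-loops along the periodic orbits of $p_i$ and $p_j$ (each of which is genuinely an orbit, hence a $\delta$-chain) to adjust the total length to a multiple of $m(\delta)$, invoking property (P4). Letting $\delta \to 0$ yields $p_i \sim p_j$, so all the $p_i$ lie in a common $D_n$. Carrying out the horseshoe construction with the required multiplicity $n$, inside a single $\mathcal{D}$-class, is precisely the technical content of \cite[Lemma 3.3]{LLT} and \cite[Theorem 1.2]{Ka}, to which the author explicitly refers.
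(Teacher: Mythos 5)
There are two genuine gaps, both in the part of the argument you identify as ``the main obstacle.'' First, an entropy pair is \emph{not} automatically proximal, even for chain transitive maps: in the full $2$-shift every off-diagonal pair is an entropy pair, in particular the pair of the two fixed points $0^\infty,1^\infty$, which is distal. So your route to $a\sim b$ collapses at the first step. The paper gets $z\sim w$ for an entropy pair not via proximality but from \cite[Lemma 3.3]{Ka}, whose underlying mechanism is that the cyclic factor $X/\!\sim_\delta$ is a finite periodic orbit, hence has zero entropy, so an entropy pair cannot separate into two distinct $\sim_\delta$-classes.

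Second, the length-adjustment argument for $p_i\sim_\delta p_j$ cannot work. A periodic orbit is itself a $\delta$-cycle, so $m(\delta)$ divides the period of every periodic point; concatenating $f$-loops along the orbits of $p_i$ and $p_j$ therefore changes the length of your chain by multiples of $m(\delta)$ and leaves its residue class modulo $m(\delta)$ untouched. If the chain you started from has length $\not\equiv 0\pmod{m(\delta)}$, no amount of looping fixes it, and indeed the claim is not automatic: for the period-two swap on two points, the two periodic points are \emph{not} $\sim_\delta$-related. Moreover (P4) presupposes $x\sim_\delta y$ and so cannot be invoked to establish it. The paper avoids all of this by pulling back $n$ distinct periodic points of the $2$-shift through a factor map $\pi\colon(Y,f^a)\to(\Sigma,\sigma)$ furnished by shadowing; the resulting tuple $(x_1,\dots,x_n)$ is an entropy $n$-tuple, hence each pair $(x_i,x_j)$ is again an entropy pair, and a second application of \cite[Lemma 3.3]{Ka} puts all the $x_i$ in one class of $\mathcal{D}$. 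The distality also comes from the symbolic side (distinct periodic shift orbits are uniformly separated, and uniform continuity of $\pi$ transfers this back), not from $n$ spatially $3r_n$-separated neighborhoods clustered around the two points $a,b$ --- a configuration that is in any case untenable for $n\ge3$.
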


\begin{proof}
Since $h_{top}(f)>0$, there is an entropy pair $(z,w)\in X^2$ for $f$. By Lemma 3.3 in \cite{Ka}, we have $z\ne w$ and $z\sim w$. Since $f$ has the shadowing property, this implies the existence of $a>0$ and a closed $f^a$-invariant subset $Y$ of $X$ such that there is a factor map
\[
\pi\colon (Y,f^a)\to(\Sigma,\sigma),
\]
where $\Sigma=\{0,1\}^\mathbb{N}$, and $\sigma\colon\Sigma\to\Sigma$ is the shift map. Fix $n\ge2$ and take distinct periodic points $y_1,y_2,\dots,y_n\in\Sigma$ for $\sigma$.  Since $\sigma$ has upe of order $n$, $(y_1,y_2,\dots,y_n)\in\Sigma^n$ is an entropy $n$-tuple for $\sigma$. Note that it is also $s_n$-distal for some $s_n>0$. Since $\pi$ is a factor map, there is
\[
(x_1,x_2,\dots,x_n)\in\pi^{-1}(y_1)\times\pi^{-1}(y_2)\times\cdots\times\pi^{-1}(y_n)
\]
which is an entropy $n$-tuple for $f^a\colon Y\to Y$ so for $f$. Then, $(x_i,x_j)$ is an entropy pair for $f$ for all $1\le i<j\le n$. Again by \cite[Lemma 3.3]{Ka}, we obtain $x_i\sim x_j$ for all $1\le i<j\le n$, implying $\{x_1,x_2,\dots,x_n\}\subset D_n$ for some $D_n\in\mathcal{D}$. Also, $(x_1,x_2,\dots,x_n)\in X^n$ is obviously $r_n$-distal for some $r_n>0$; therefore, the lemma has been proved.
\end{proof}

The next lemma implies that distal $n$-tuples, $n\ge2$, whose existence has been proved in Lemma 2.4, are distributed to all components of $\mathcal{D}$.

\begin{lem}
Let $n\ge 2$. Suppose that there are $r_n>0$ and an $r_n$-distal $n$-tuple $(x_1,x_2,\dots,x_n)\in X^n$ such that $\{x_1,x_2,\dots,x_n\}\subset D_n$ for some $D_n\in\mathcal{D}$. Then, for every $D\in\mathcal{D}$, there is an $r_n$-distal $n$-tuple $(y_1,y_2,\dots,y_n)\in X^n$ such that $\{y_1,y_2,\dots,y_n\}\subset D$.
\end{lem}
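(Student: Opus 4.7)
The plan is to produce the tuple $(y_1,\dots,y_n)$ as an accumulation point of the forward orbit of $(x_1,\dots,x_n)$ along a carefully chosen subsequence of iterates. Two closure properties drive the argument: the quantity $\inf_{k\ge 0}\min_{i<j}d(f^k(\cdot),f^k(\cdot))$ is preserved under pointwise limits and forward iteration (so $r_n$-distality passes to limits of tuples), while $\sim$ is closed and $(f\times f)$-invariant (so a tuple whose coordinates all sit in one class of $\mathcal{D}$ retains this property under the same operations).

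First I would fix $D\in\mathcal{D}$, pick $y\in D$, and show that the $\omega$-limit set $\omega(x_1)$ of $x_1$ under $f$ meets $D$. For each $\delta>0$, the $\delta$-cyclic decomposition $\mathcal{D}^\delta$ is cyclically permuted by $f$ with period $m(\delta)$, so there is some $k_0(\delta)\in\{0,\dots,m(\delta)-1\}$ with $f^{k_0(\delta)}(\mathcal{D}^\delta(x_1))=\mathcal{D}^\delta(y)$; consequently $f^k(x_1)\in\mathcal{D}^\delta(y)$ for all $k\equiv k_0(\delta)\pmod{m(\delta)}$. Passing to a convergent subsequence of such iterates (using compactness of $X$ and clopenness of $\mathcal{D}^\delta(y)$) yields $\omega(x_1)\cap\mathcal{D}^\delta(y)\ne\emptyset$.

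Next I would apply Cantor's intersection theorem. Choosing $\delta_j\downarrow 0$, the sets $\mathcal{D}^{\delta_j}(y)$ form a decreasing chain of clopen sets by Remark 1.2(1), with $\bigcap_j\mathcal{D}^{\delta_j}(y)=\mathcal{D}(y)=D$ by Remark 2.1(1). Hence $\bigl(\omega(x_1)\cap\mathcal{D}^{\delta_j}(y)\bigr)_{j\ge 1}$ is a nested sequence of nonempty closed subsets of the compact space $X$, so has a common point $y_1\in\omega(x_1)\cap D$. Pick $k_l\to\infty$ with $f^{k_l}(x_1)\to y_1$ and, refining by compactness, assume also $f^{k_l}(x_i)\to y_i$ for every $2\le i\le n$. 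Because $f^{k_l}(x_i)\sim f^{k_l}(x_1)$ for each $l$ and $\sim$ is closed, $y_i\sim y_1$, so $\{y_1,\dots,y_n\}\subset D$; and for any $k\ge 0$ and $1\le i<j\le n$, letting $l\to\infty$ in $d(f^{k+k_l}(x_i),f^{k+k_l}(x_j))\ge r_n$ gives $d(f^k(y_i),f^k(y_j))\ge r_n$, verifying $r_n$-distality.

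I do not anticipate a real obstacle; the proof is essentially a diagonal compactness argument. The only points that need care are the inclusion $\mathcal{D}^{\delta_j}(y)\supset\mathcal{D}^{\delta_{j+1}}(y)$ and the identity $\bigcap_j\mathcal{D}^{\delta_j}(y)=D$, both recorded in Remark 1.2(1) and Remark 2.1(1), together with the cyclic action of $f$ on $\mathcal{D}^\delta$ which supplies the arithmetic progression of visits to $\mathcal{D}^\delta(y)$ needed to populate $\omega(x_1)\cap\mathcal{D}^\delta(y)$.
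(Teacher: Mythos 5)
Your proof is correct. It reaches the same endgame as the paper --- pass to a subsequence of iterates along which every coordinate of the tuple converges, use that $\sim$ is closed and $(f\times f)$-invariant to place the limit tuple inside a single class, and observe that $r_n$-distality survives pointwise limits --- but it differs in how it arranges for the limit to land in the prescribed class $D$. The paper invokes the structure theorem of Richeson and Wiseman: the quotient $(\mathcal{D},g)$ of $(X,f)$ by $\sim$ is a periodic orbit or an odometer, hence minimal, so $g^{n_k}(D_n)\to D$ for some $n_k\to\infty$, and this is pulled back through the factor map $\pi=\mathcal{D}(\cdot)$. You instead re-derive the needed consequence of minimality by hand: the cyclic action of $f$ on each finite decomposition $\mathcal{D}^\delta$ forces $f^k(x_1)\in\mathcal{D}^\delta(y)$ along an arithmetic progression, whence $\omega(x_1)\cap\mathcal{D}^\delta(y)\ne\emptyset$; Cantor's intersection theorem applied to the nested nonempty closed sets $\omega(x_1)\cap\mathcal{D}^{\delta_j}(y)$, together with $\bigcap_j\mathcal{D}^{\delta_j}(y)=\mathcal{D}(y)=D$ (Remark 1.2(1) and Remark 2.1(1)), then gives $\omega(x_1)\cap D\ne\emptyset$. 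Your route is more elementary and self-contained --- it avoids citing \cite{RW} and sidesteps any discussion of the topology on the quotient space $\mathcal{D}$ --- at the cost of a somewhat longer argument; the paper's route is shorter but leans on the classification of the factor system.
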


\begin{proof}
In \cite{RW}, the factor of $(X,f)$ with respect to $\sim$ is shown to be a periodic orbit or topologically conjugate to an odometer and so minimal in both cases (see the proof of \cite[Theorem 6]{RW}). Note that $\mathcal{D}=X\slash\sim$, and put $\pi=\mathcal{D}(\cdot)\colon X\to\mathcal{D}$, the quotient map. Then, it gives the factor map $\pi\colon(X,f)\to(\mathcal{D},g)$, where $g\colon\mathcal{D}\to\mathcal{D}$ is a map defined by $g\circ\pi=\pi\circ f$.

Given any $D\in\mathcal{D}$, since $g$ is minimal, there is a sequence $0\le n_1<n_2<\cdots$ such that $\lim_{k\to\infty}g^{n_k}(D_n)=D$. We may assume $\lim_{k\to\infty}f^{n_k}(x_i)=y_i$ for all $1\le i\le n$ for some $y_i\in X$. Then, $(y_1,y_2,\dots,y_n)\in X^n$ is $r_n$-distal. Also, we have
\begin{align*}
\pi(y_i)&=\lim_{k\to\infty}\pi(f^{n_k}(x_i))=\lim_{k\to\infty}g^{n_k}(\pi(x_i))=\lim_{k\to\infty}g^{n_k}(D_n)=D
\end{align*}
for all $1\le i\le n$, i.e., $\{y_1,y_2,\dots,y_n\}\subset D$. Thus, the lemma has been proved.
\end{proof}

The last lemma states that, for $D\in\mathcal{D}$, each $y\in D$ is approximated by $z\in D$ whose orbit is asymptotically close to the orbit of any reference point $x\in D$. 

\begin{lem}
Suppose that $f$ has the properties (2) and (3) in Theorem 1.1. Let $D\in\mathcal{D}$ and $x\in D$. Then, for any $y\in D$ and $\epsilon>0$, there is $z\in D$ such that $d(y,z)<\epsilon$ and $\limsup_{k\to\infty}d(f^k(x),f^k(z))<\epsilon$. 
\end{lem}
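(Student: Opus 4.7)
The plan is to exhibit $z$ by shadowing a carefully built pseudo orbit which starts at $y$ and, after finitely many steps, coincides with the forward orbit of $x$ at matching time indices. The three tools are the shadowing along $\mathcal{D}$ from property~(2) of Theorem~1.1, the approximation Lemma~2.2 (which in turn uses property~(3) via Lemma~2.1), and property~(P4) of $\sim_{\delta}$ from Remark~1.1; the last of these is what makes the time-index bookkeeping work out.

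Fix $\epsilon>0$. First I would apply the shadowing along $\mathcal{D}$ to produce $\delta_1>0$ with $\delta_1<\epsilon/2$ such that every $(\mathcal{D},\delta_1)$-pseudo orbit is $(\epsilon/2)$-shadowed by some point $\sim$-equivalent to its initial point. Then, via Lemma~2.2, I would pick $0<\delta_2\le\delta_1$ so that every $\delta_2$-pseudo orbit of $f$ admits an approximating $(\mathcal{D},\delta_1)$-pseudo orbit within $\delta_1$ that preserves the initial point.

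Next, I would construct the $\delta_2$-pseudo orbit explicitly. Let $m=m(\delta_2)$ and $N=N(\delta_2)$ be the constants associated with $\sim_{\delta_2}$. Since $y\sim x$ gives $y\sim_{\delta_2}x$, and (P3) yields $x\sim_{\delta_2}f^{mn}(x)$ for every $n\ge0$, transitivity of the equivalence relation $\sim_{\delta_2}$ gives $y\sim_{\delta_2}f^{mn}(x)$. Applying (P4) for some fixed $n\ge N$, I obtain a $\delta_2$-chain $(y_i)_{i=0}^K$ with $K=mn$, $y_0=y$, and $y_K=f^K(x)$. I then extend by setting $y_k=f^k(x)$ for $k\ge K$; since $d(f(y_K),y_{K+1})=0$, the resulting infinite sequence $(y_i)_{i\ge0}$ is a $\delta_2$-pseudo orbit whose tail is literally the forward orbit of $x$ at matching time indices.

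Finally, Lemma~2.2 produces a $(\mathcal{D},\delta_1)$-pseudo orbit $(y'_i)_{i\ge0}$ with $y'_0=y$ and $\sup_i d(y_i,y'_i)<\delta_1$, after which the shadowing along $\mathcal{D}$ yields $z\in X$ with $z\sim y$ (so $z\in D$), $d(y,z)\le\epsilon/2<\epsilon$, and for every $k\ge K$
\[
d(f^k(x),f^k(z))\le d(y_k,y'_k)+d(y'_k,f^k(z))<\delta_1+\epsilon/2<\epsilon,
\]
whence $\limsup_{k\to\infty}d(f^k(x),f^k(z))<\epsilon$. The main subtlety the argument must overcome, and the reason (P4) is the appropriate tool, is the time-index alignment: a pseudo orbit's transitions each consume one unit of time, so the bridging chain from $y$ must have length precisely $K$ while landing exactly on $f^K(x)$, so that the tail $y_k=f^k(x)$ glues on seamlessly. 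Mere chain transitivity would provide a chain of some length, but not one of the correct length to keep the indices synchronized.
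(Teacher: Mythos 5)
Your proposal is correct and follows essentially the same route as the paper: bridge from $y$ to $f^{mN}(x)$ with a $\delta$-chain of length a multiple of $m$ via (P4), concatenate with the tail orbit of $x$, upgrade to a $(\mathcal{D},\cdot)$-pseudo orbit by Lemma 2.2, and shadow it by a point $\sim$-equivalent to $y$. The constants are organized slightly differently ($\delta_1,\delta_2$ versus the paper's $\gamma,\delta$), but the argument and the final estimates are the same.
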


\begin{proof}
Take $0<\gamma<\epsilon/2$ such that every $(\mathcal{D,\gamma})$-pseudo orbit $(x_i)_{i\ge0}$ of $f$ is $\epsilon/2$-shadowed by some $z\in X$ with $x_0\sim z$. For this $\gamma$, take $\delta>0$ as in Lemma 2.2. Also, choose $N>0$ as in the property (P4) of $\sim_\delta$ (see Remark 1.1).

Note that $y \in D$ implies $x\sim y$ and so $x\sim_\delta y$. Since $x\sim_\delta f^{mN}(x)$, we have $y\sim_\delta f^{mN}(x)$. Then, the choice of $N$ gives a $\delta$-chain $\alpha=(y_i)_{i=0}^{mN}$ of $f$ with $y_0=y$ and $y_{mN}=f^{mN}(x)$. Let
\[
\beta=(f^{mN}(x),f^{mN+1}(x),\dots)
\]
and $\xi=\alpha\beta=(z_i)_{i\ge0}$. Since $\xi$ is a $\delta$-pseudo orbit of $f$, there is a $(\mathcal{D},\gamma)$-pseudo orbit $(x_i)_{i\ge0}$ of $f$ with $z_0=x_0$ and $\sup_{i\ge0}d(z_i,x_i)<\gamma$, which is $\epsilon/2$-shadowed by some $z\in X$ with $x_0\sim z$. Because $y=y_0=z_0=x_0$, we have $y\sim z$ and so $z\in D$. Note that $d(y,z)=d(x_0,z)\le\epsilon/2<\epsilon$. Also, we have
\[
d(f^i(x),f^i(z))=d(z_i,f^i(z))\le d(z_i,x_i)+d(x_i,f^i(z))<\gamma+\epsilon/2
\]
for every $i\ge mN$, so $\limsup_{k\to\infty}d(f^k(x),f^k(z))\le\gamma+\epsilon/2<\epsilon$. This completes the proof.
\end{proof}

By the above lemmas, we prove Theorem 1.1.

\begin{proof}[Proof of Theorem 1.1]
For any $n\ge2$, from Lemmas 2.3, 2.4, and 2.5, it follows that there is $r_n>0$ such that for every $D\in\mathcal{D}$, there is an $r_n$-distal $n$-tuple $(y_1,y_2,\dots,y_n)\in X^n$ such that $\{y_1,y_2,\dots,y_n\}\subset D$. Fix $0<\delta_n<r_n$. Given any $D\in\mathcal{D}$, we show that $D^n\cap{\rm DC1}_n^{\delta_n}(X,f)$ is a residual subset of $D^n$.

Following \cite{LLT}, put
\[
A_n^\sigma(D,f)=\{(x_1,x_2,\dots,x_n)\in D^n\colon\limsup_{k\to\infty}\max_{1\le i<j\le n}d(f^k(x_i),f^k(x_j))<\sigma\}
\]
for $\sigma>0$, and
\[
D_n^{\delta_n}(D,f)=\{(x_1,x_2,\dots,x_n)\in D^n\colon\liminf_{k\to\infty}\min_{1\le i<j\le n}d(f^k(x_i),f^k(x_j))>\delta_n\}.
\]
Then, by Lemma 2.6, we see that $A_n^\sigma(D,f)$ is dense in $D^n$ for any $\sigma>0$, and also $D_n^{\delta_n}(D,f)$ is dense in $D^n$. Again following \cite{LLT}, put
\begin{align*}
R(l,q,m)=&\{(x_1,x_2,\dots,x_n)\in D^n\colon \\
&\frac{1}{m}|\{0\le k\le m-1\colon\max_{1\le i<j \le n}d(f^k(x_i),f^k(x_j))<\frac{1}{l}\}|>1-\frac{1}{q}\}
\end{align*}
for $l,q,m\ge1$, and
\begin{align*}
S^{\delta_n}(q,m)=&\{(x_1,x_2,\dots,x_n)\in D^n\colon \\
&\frac{1}{m}|\{0\le k\le m-1\colon\min_{1\le i<j\le n}d(f^k(x_i),f^k(x_j))>\delta_n\}|>1-\frac{1}{q}\}
\end{align*}
for $q,m\ge1$. Note that those are open subsets of $D^n$. Letting
\[
R=\cap_{l=1}^\infty\cap_{q=1}^\infty\cap_{p=1}^\infty\cup_{m=p}^\infty R(l,q,m)
\quad
\text{and}
\quad
S^{\delta_n}=\cap_{q=1}^\infty\cap_{p=1}^\infty\cup_{m=p}^\infty S^{\delta_n}(q,m),
\]
we obtain $R\cap S^{\delta_n}\subset D^n\cap{\rm DC1}_n^{\delta_n}(X,f)$. Since
\[
A_n^{\frac{1}{l}}(D,f)\subset\cup_{m=p}^\infty R(l,q,m)
\quad
\text{and}
\quad
D_n^{\delta_n}(D,f)\subset\cup_{m=p}^\infty S^{\delta_n}(q,m)
\]
for any $l,q,p\ge1$, $R$, $S^{\delta_n}$, and so $R\cap S^{\delta_n}
$ are dense $G_\delta$-sets of $D^n$. This implies $D^n\cap{\rm DC1}_n^{\delta_n}(X,f)$ is a residual subset of $D^n$, completing the proof.
\end{proof}

Finally, we give a proof of Proposition 1.1.

\begin{proof}[Proof of Proposition 1.1]
First, we show that $f$ has the  property (2). For any $\epsilon>0$, take $\delta>0$ such that every $\delta$-limit-pseudo orbit of $f$  is $\epsilon$-limit shadowed by some point of $X$. Given any $(\mathcal{D},\delta)$-pseudo orbit $\xi=(x_i)_{i\ge0}$ of $f$, we have $x_i\sim f^i(x_0)$ for all $i\ge0$. For each $n\ge0$, consider
\[
\xi_n=(x_0,x_1,\dots,x_n,f(x_n),f^2(x_n),\dots),
\]
a $\delta$-limit-pseudo orbit of $f$, which is $\epsilon$-limit shadowed by $z_n\in X$. Assume $x_0\not\sim z_n$ for some $n\ge0$. Then, there is $\gamma>0$ such that $x_0\not\sim_\gamma z_n$, implying $f^n(x_0)\not\sim_\gamma f^n(z_n)$. Since $x_n\sim f^n(x_0)$, we have $x_n\sim_\gamma f^n(x_0)$, so $x_n\not\sim_\gamma f^n(z_n)$. However, this contradicts that
\[
\lim_{i\to\infty}d(f^i(x_n),f^{i+n}(z_n))=0.
\]
Thus, we obtain $x_0\sim z_n$ for every $n\ge0$. Note that $d(x_i,f^i(z_n))\le\epsilon$ for any $0\le i\le n$ and $n\ge0$. Taking a sequence $0\le n_1<n_2<\cdots$ and $z\in X$ with
\[
\lim_{k\to\infty}z_{n_k}=z,
\]
we obtain $x_0\sim z$ (since $\sim$ is closed) and $d(x_i,f^i(z))\le\epsilon$ for all $i\ge0$. Since $\xi$ and then $\epsilon>0$ are arbitrary, $f$ has the shadowing property along $\mathcal{D}$.

Next, we show that $f$ has the property (3). Assume the contrary, that is, $\mathcal{D}(\cdot)$ is not continuous at some $x\in X$, to derive a contradiction. Then, by Remark 2.1 (2), we see that there are $\epsilon>0$ and $y_k\in X$, $k\ge1$, such that $\lim_{k\to\infty}y_k=x$ and $\mathcal{D}(x)\not\subset B_{\epsilon}(\mathcal{D}(y_k))$ for every $k\ge1$. For this $\epsilon$, take $\delta>0$ such that every $\delta$-limit-pseudo orbit of $f$  is $\epsilon/2$-limit shadowed by some point of $X$. Choose $N>0$ as in the property (P4) of $\sim_\delta$ (see Remark 1.1). Fix $k\ge1$ with $y_k\in\mathcal{D}^\delta(x)$ and take $z\in\mathcal{D}(x)$ such that $z\notin B_{\epsilon}(\mathcal{D}(y_k))$. Since $\{y_k,z\}\subset\mathcal{D}^\delta(x)$, we have $y_k\sim_\delta z$. This with $y_k\sim_\delta f^{mN}(y_k)$ implies $z\sim_\delta f^{mN}(y_k)$. The choice of $N$ gives a $\delta$-chain $\alpha=(x_i)_{i=0}^{mN}$ of $f$ with $x_0=z$ and $x_{mN}=f^{mN}(y_k)$. Let
\[
\beta=(f^{mN}(y_k),f^{mN+1}(y_k),\dots)
\]
and $\xi=\alpha\beta=(z_i)_{i\ge0}$, a $\delta$-limit-pseudo orbit of $f$, $\epsilon/2$-limit shadowed by $w\in X$. Since
\[
d(f^i(y_k),f^i(w))=d(z_i,f^i(w))
\]
for every $i\ge mN$,
\[
\lim_{i\to\infty}d(f^i(y_k),f^i(w))=0.
\]
This yields $y_k\sim w$ and so $w\in\mathcal{D}(y_k)$  (see Remark 1.2 (2)). By $d(z,w)=d(x_0,w)=d(z_0,w)\le\epsilon/2$, we obtain $z\in B_{\epsilon}(\mathcal{D}(y_k))$, a contradiction. Thus, $\mathcal{D}(\cdot)$ is continuous, and the proof has been completed.
\end{proof}

\section*{Acknowledgements}

This work was supported by JSPS KAKENHI Grant Number JP20J01143.

\end{document}